\newtheorem{thm}{Theorem}[section]
\newtheorem{prop}[thm]{Proposition}
\newtheorem{lem}[thm]{Lemma}
\newtheorem{cor}[thm]{Corollary}
\theoremstyle{definition}
\newtheorem{definition}[thm]{Definition}
\theoremstyle{remark}
\numberwithin{equation}{section}
\begin{document}
\title{On Asymptotic Property C}
\author{Trevor Davila}

\begin{abstract}
The notion of asymptotic property $C$ was introduced in \cite{dranishnikov}. We show that asymptotic property $C$ is preserved by finite products. We also show that countable restricted direct products of countable groups with finite asymptotic dimension have asymptotic property $C$. Then we introduce hyperbolic property $C$, an infinite dimensional version of hyperbolic dimension.
\end{abstract}

\maketitle

\section{Introduction}
Asymptotic dimension was introduced by Mikhail Gromov in \cite{gromov} to study finitely-generated groups, but this notion applies to all metric spaces. In \cite{yu1} Guoliang Yu proved the Novikov higher signature conjecture for groups with finite asymptotic dimension. In \cite{yu2} Yu introduced property A, a dimension-like property weaker than finite asymptotic dimension, and proved the coarse Baum-Connes conjecture for groups with property A. This stimulated interest in large scale dimension-like properties which are satisfied by spaces with infinite asymptotic dimension and which imply property A. Such infinite dimensional properties include asymptotic property C \cite{dranishnikov}, finite decomposition complexity \cite{guentnertesserayu}, and straight finite decomposition complexity \cite{dranishnikovzarichnyi}. A basic question is which constructions preserve these properties. In section 3 we show that asymptotic property C is preserved by finite direct products, answering a question from \cite{banakh} and \cite{bellmoran}. We also show that asymptotic property C holds for countable restricted direct products of groups with finite asymptotic dimension. This can be viewed as a partial affirmative answer to question 3.2 from \cite{yamauchi}.

In \cite{dranishnikovzarichnyi} the following implications were demonstrated for metric spaces:
$$\textit{asymptotic property C} \implies sFDC \implies \textit{property A}$$
In section 4 we introduce the notions of hyperbolic property C and weak hyperbolic property C, which are dimension-like properties for spaces with infinite hyperbolic dimension. These satisfy the following implications for metric spaces:
\begin{equation}
\begin{aligned}
\textit{asymptotic property C} & \implies \textit{hyperbolic property C} \implies \textit{weak hyperbolic property C} \\
& \implies sFDC \implies \textit{property A} \notag
\end{aligned}
\end{equation}

The author has been made aware that the preservation of asymptotic property C by finite direct products was shown independently in \cite{bellnagorko}. The author would like to thank Alexander Dranishnikov for many stimulating conversations during the writing of this paper.

\section{Preliminaries}
Let $X$ be a metric space. For nonempty $A,B \subset X$, we let $d(A,B) = \text{inf}\{ d(a,b) \, | \, a \in A, \, b \in B \}$.

Let $R > 0$. A family $\mathcal{U}$ of nonempty subsets of $X$ is $R$-disjoint if $d(A,B) > R$ for all $A,B \in \mathcal{U}$ with $A \neq B$.

A family $\mathcal{U}$ of subsets of $X$ is uniformly bounded if $\text{mesh} \, \mathcal{U} = \text{sup} \{ \text{diam}(U) \, : \, U \in \mathcal{U} \} < \infty$.

The asymptotic dimension of $X$ does not exceed $n$, written $\text{asdim} X \leq n$, if for every $R > 0$ there are uniformly bounded $R$-disjoint families $\mathcal{U}_0, \mathcal{U}_1, \dots , \mathcal{U}_n$ of subsets of $X$ such that $\bigcup_{i=0}^n \mathcal{U}_i$ is a cover of $X$. We say $X$ has finite asymptotic dimension if there exists $n$ with $\text{asdim} \, X \leq n$.

For a family of metric spaces $\{ X_\alpha \}$, we say $\text{asdim} \, \{ X_\alpha \} \leq n$ if for every $R > 0$ there exists $D \geq 0$ such that for every $\alpha$ there exist $R$-disjoint families $\mathcal{U}_0 , \dots \mathcal{U}_n$ such that $\text{mesh} \, \mathcal{U}_i \leq D$ for $0 \leq i \leq n$ and $\bigcup_{i=0}^n \mathcal{U}_i$ is a cover of $X_\alpha$.

We recall the definition of asymptotic property $C$ from \cite{dranishnikov}.

\begin{definition}
A metric space $X$ has \textit{asymptotic property C} if for every sequence $R_0 \leq R_1 \leq R_2 \dots $ of positive reals there exists $n \geq 0$ and uniformly bounded $R_i$-disjoint families $\mathcal{U}_i$, $i = 0, \dots , n$ such that $\bigcup_{i=0}^n \mathcal{U}_i$ is a cover of $X$.
\end{definition}

For metric spaces $X,Y$, a function $f:X \to Y$ is a coarse embedding if there exist nondecreasing functions $\rho_-, \rho_+ : [0, + \infty )$ with
$$\lim_{t \to + \infty} \rho_-(t) = \lim_{t \to + \infty} \rho_+(t) = + \infty$$
and
$$\rho_-(d(x,y)) \leq d(f(x),f(y)) \leq \rho_+(d(x,y))$$
for all $x,y \in X$.

The following is easy to check.
\begin{prop}
For metric spaces $X,Y$ with a coarse embedding $f: X \to Y$, if $Y$ has asymptotic property $C$, then $X$ has asymptotic property $C$.
\end{prop}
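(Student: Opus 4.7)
The plan is the obvious pullback: given a sequence $R_0 \leq R_1 \leq \cdots$ of positive reals on $X$, I will use the upper control function $\rho_+$ of $f$ to produce a sequence $S_i$ on $Y$, apply asymptotic property $C$ of $Y$ to this new sequence, and then pull the resulting families back through $f$. The lower control function $\rho_-$ will handle uniform boundedness, and $\rho_+$ will handle $R_i$-disjointness.

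Concretely, first I would set $S_i := \rho_+(R_i)$. Since $Y$ has asymptotic property $C$, there exist $n \geq 0$ and uniformly bounded $S_i$-disjoint families $\mathcal{V}_0, \dots, \mathcal{V}_n$ whose union covers $Y$. Define
\[
\mathcal{U}_i := \{\, f^{-1}(V) : V \in \mathcal{V}_i,\ f^{-1}(V) \neq \emptyset \,\}
\]
for $i = 0, \dots, n$. Then $\bigcup_i \mathcal{U}_i$ covers $X$, since $f(X) \subseteq Y$ is covered by $\bigcup_i \mathcal{V}_i$.

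For $R_i$-disjointness of $\mathcal{U}_i$: if $A = f^{-1}(V)$ and $B = f^{-1}(W)$ were distinct members of $\mathcal{U}_i$ with $d(a,b) \leq R_i$ for some $a \in A, b \in B$, then $d(f(a), f(b)) \leq \rho_+(R_i) = S_i$, contradicting $d(V,W) > S_i$. For uniform boundedness, set $D := \mathrm{mesh}\, \bigcup_i \mathcal{V}_i < \infty$; for any $x,y$ in the same set $f^{-1}(V) \in \mathcal{U}_i$ we have $\rho_-(d(x,y)) \leq d(f(x),f(y)) \leq D$, and since $\rho_-(t) \to \infty$ the value $T := \sup\{ t : \rho_-(t) \leq D \}$ is finite, so $\mathrm{diam}(f^{-1}(V)) \leq T$ uniformly.

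There is no real obstacle here; the only point to be careful about is that $\rho_-$ need not be invertible in any strict sense, so the bound on $\mathrm{diam}(f^{-1}(V))$ must be obtained from $\rho_-(t)\to\infty$ rather than from a formal inverse, as indicated above. Everything else is a direct unpacking of the definitions.
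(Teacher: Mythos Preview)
Your proposal is correct and is exactly the standard pullback argument one would expect. The paper itself does not give a proof of this proposition at all; it simply states ``The following is easy to check'' and moves on, so there is nothing to compare against beyond noting that your argument is the natural verification the author had in mind.
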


Given a sequence of pointed metric spaces $(X_i, x_i)_{i=1}^\infty$, we define its restricted product $\times_{i=1}^\infty X_i$ to be the set of sequences $(a_i)_{i=1}^\infty$ with each $a_i \in X_i$ and $a_i = x_i$ for all but finitely many $i$. We equip $\times_{i=1}^\infty X_i$ with the metric
$$d((a_i), (b_i)) = \sum_{i=1}^\infty i \cdot d(a_i, b_i) $$

In \cite{smith} it was shown that any two proper, left-invariant metrics on a countable group are coarse equivalent. Given a sequence of countable groups $(G_i)_{i=1}^\infty$ each equipped with a proper, left-invariant metric, we choose the identity as the base point of each $G_i$. Then the restricted product $\times_{i=1}^\infty G_i$ is a countable group with the operation which applies the group operation of each $G_i$ coordinate-wise, and clearly the metric defined above is proper and left-invariant.

A tree is a connected acyclic graph. We view a tree $T$ as a metric space by equipping its set of vertices with the shortest path metric, i.e. for $u,v \in T$ we say $d(u,v)$ is the length of the shortest path in $T$ from $u$ to $v$.

For a tree $T$ with root $e$ and $0 \leq m < n$, we define the annulus from $m$ to $n$:

$$[m,n) = \{ v \in T : m \leq d(v,e) < n \}$$

Let $\mathcal{X}, \mathcal{Y}$ be metric families and $R>0$. We say $\mathcal{X}$ is \textit{$R$-decomposable over $\mathcal{Y}$}, denoted $\mathcal{X} \stackrel{R}{\longrightarrow} \mathcal{Y}$, if for any $X \in \mathcal{X}$, $X = \bigcup (\mathcal{U}_1 \cup \mathcal{U}_2)$, where $\mathcal{U}_1$ and $\mathcal{U}_2$ are $R$-disjoint families such that $\mathcal{U}_1 \cup \mathcal{U}_2 \subset \mathcal{Y}$.
We recall the definition of straight finite decomposition complexity from \cite{dranishnikovzarichnyi}.

\begin{definition}
A metric family $\mathcal{X}$ has \textit{straight finite decomposition complexity} (sFDC) if for every sequence $R_0 \leq R_1 \leq \dots$ of positive reals there exists $n \in \mathbb{N}$ and metric families $\mathcal{U}_i$, $i = 0, \dots , n$ such that $\mathcal{U}_n$ is uniformly bounded and
$$\mathcal{X} \stackrel{R_{0}}{\longrightarrow} \mathcal{U}_{0} \stackrel{R_{1}}{\longrightarrow} \mathcal{U}_1 \stackrel{R_{2}}{\longrightarrow} \dots \stackrel{R_{n}}{\longrightarrow} \mathcal{U}_n$$
\end{definition}

For a metric space $X$, $r > 0$ and $x \in X$, let $B_r(x)$ denote the open ball in $X$ of radius $r$ centered at $x$.

A subset $U \subset X$ of a metric space $X$ is $(N, R)$-\textit{large scale doubling} if for every $x \in X$ and every $r \geq R$, $B_{2r}(x) \cap U$ can be covered by $N$ balls of radius $r$ with centers in $X$.

A family $\mathcal{U}$ of subsets of a metric space $X$ is \textit{uniformly large scale doubling} if there exists $(N,R)$ such that each $U \in \mathcal{U}$ is $(N,R)$-large scale doubling, and such that every finite union of sets in $\mathcal{U}$ is $(N,R')$-large scale doubling for some $R' > 0$.

A family $\mathcal{U}$ of subsets of a metric space $X$ is \textit{weakly uniformly large scale doubling} if there exists $(N,R)$ such that each $U \in \mathcal{U}$ is $(N,R)$-large scale doubling.

\section{Asymptotic Property C}
\begin{thm}
If $X$ and $Y$ are metric spaces with asymptotic property $C$, then $X \times Y$ has asymptotic property $C$.
\end{thm}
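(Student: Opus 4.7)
The plan is to adapt the standard product argument for asymptotic dimension to the APC setting. I equip $X \times Y$ with the sup metric so that for bounded $V \subseteq X$ and $W \subseteq Y$ one has $d(V \times W, V' \times W') = \max\{d_X(V,V'), d_Y(W,W')\}$; hence if $\mathcal{V}$ is $R$-disjoint on $X$ and $\mathcal{W}$ is $R'$-disjoint on $Y$, then the product family $\{V \times W : V \in \mathcal{V}, W \in \mathcal{W}\}$ is $\min(R, R')$-disjoint on $X \times Y$.

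Given $R_0 \leq R_1 \leq \dots$, I would first use APC of $X$ with this sequence to obtain some $m$ and uniformly bounded $R_j$-disjoint families $\mathcal{V}_0, \dots, \mathcal{V}_m$ covering $X$; then use APC of $Y$ with the shifted sequence $S_i := R_{(m+1)(i+1)-1}$ to obtain $n$ and uniformly bounded $S_i$-disjoint families $\mathcal{W}_0, \dots, \mathcal{W}_n$ covering $Y$; and then reapply APC of $X$ with the sequence $T_j := R_{n(m+1)+j}$ to obtain families $\mathcal{V}'_0, \dots, \mathcal{V}'_{m'}$ with $\mathcal{V}'_j$ being $T_j$-disjoint. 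The product families $\mathcal{U}_{i,j} := \{V \times W : V \in \mathcal{V}'_j, W \in \mathcal{W}_i\}$ are then uniformly bounded and cover $X \times Y$. I would reindex these in lex order as $\mathcal{U}_{k(i,j)}$ with $k(i,j) = i(m+1) + j$. Each $\mathcal{U}_{i,j}$ is $\min(T_j, S_i)$-disjoint, and the inequalities $k(i,j) \leq n(m+1) + j$ (from $i \leq n$) together with $k(i,j) \leq (m+1)(i+1) - 1$ (from $j \leq m$) give that $\mathcal{U}_{k(i,j)}$ is $R_{k(i,j)}$-disjoint, as required by the APC definition.

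The main obstacle is controlling $m'$ in the third step: applying APC to $X$ with the stronger sequence $T_j$ can in principle require more than $m+1$ families, in which case the lex indexing above does not apply directly, and the choice of $S_i$ would in turn need to be strengthened, producing a circular dependency between $m$, $n$, and $m'$. Resolving this requires either iterating the construction until the number of $X$-families stabilizes, or reformulating the argument in terms of family APC, treating $\{X \times W : W \in \bigcup_i \mathcal{W}_i\}$ as a metric family with uniform APC inherited from $X$ and combining via a Hurewicz-style step that yields an additive rather than multiplicative bound on the number of families. I expect this coordination between the APC applications on $X$ and $Y$ to be the central technical challenge of the proof.
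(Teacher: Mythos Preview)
Your diagnosis of the obstacle is exactly right, and it is a genuine gap: with the scheme you describe you must know the number of $X$-families before choosing the $Y$-sequence, and vice versa. Iterating in the hope that the count stabilizes has no reason to terminate, and invoking a family-APC/Hurewicz mechanism would require proving an auxiliary result that is not obviously easier than the theorem itself. So as written the proposal is a sketch that correctly locates the difficulty but does not resolve it.

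The paper breaks the circularity by a simple reindexing trick that you are missing. Fix once and for all a bijection $f:\mathbb{N}\to\mathbb{N}\times\mathbb{N}$ and rename the given sequence as $\{R_{m,n}\}_{m,n\in\mathbb{N}}$. Now, instead of applying APC of $Y$ a single time, apply it \emph{once for every} $m\in\mathbb{N}$, to the sequence $(R_{m,n})_{n}$; this yields an integer $n(m)$ and $R_{m,n}$-disjoint uniformly bounded families $\mathcal{V}^m_1,\dots,\mathcal{V}^m_{n(m)}$ covering $Y$. Set $R'_m=\max\{R_{m,1},\dots,R_{m,n(m)}\}$. Only now apply APC of $X$, once, to the sequence $(R'_m)_m$, obtaining $k$ and $R'_m$-disjoint families $\mathcal{U}_1,\dots,\mathcal{U}_k$ covering $X$. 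The product families $\mathcal{W}_{m,n}=\{U\times V: U\in\mathcal{U}_m,\ V\in\mathcal{V}^m_n\}$ for $m\le k$, $n\le n(m)$ are uniformly bounded, cover $X\times Y$, and each $\mathcal{W}_{m,n}$ is $R_{m,n}$-disjoint because $R'_m\ge R_{m,n}$. Pulling the double index back through $f^{-1}$ and inserting empty families at the unused indices finishes the proof. The point is that the dependency is made one-directional: $n(m)$ is determined for all $m$ before $X$ is ever consulted, so there is nothing to iterate.
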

\begin{proof}
Fix a sequence $R_1 , R_2 , \dots$ of positive real numbers. Use a bijection \newline
$f: \mathbb{N} \to \mathbb{N} \times \mathbb{N}$ to reindex this sequence as $\{ R_{m,n} : m,n \in \mathbb{N} \}$. Since $Y$ has asymptotic property $C$, for each $m$ there is $n(m) \in \mathbb{N}$ such that there exist $\mathcal{V}^m_1, \mathcal{V}^m_2, \dots , \mathcal{V}^m_{n(m)}$ collections of subsets of $Y$ such that each $\mathcal{V}^m_n$ is $R_{m,n}$-disjoint and uniformly bounded, and 
$$\bigcup_{n=1}^{n(m)} \mathcal{V}^{m}_n$$
is a cover of $Y$. For each $m$, let $R'_m = \text{max}(R_{m,1}, R_{m, 2}, \dots, R_{m, n(m)} )$

Now we apply the asymptotic property $C$ of $X$ to the sequence $R'_1, R'_2, \dots$ to obtain a sequence $\mathcal{U}_1, \mathcal{U}_2, \dots \mathcal{U}_k$ of collections of subsets of $X$ such that each $\mathcal{U}_m$ is $R'_m$-disjoint and uniformly bounded, and
$$\bigcup_{m=1}^{k} \mathcal{U}_m$$
is a cover of $X$.

For each $m \leq k$ and $n \leq n(m)$, let
$$\mathcal{W}_{m,n} = \{ U \times V : U \in \mathcal{U}_m \text{ and } V \in \mathcal{V}^m_n \}$$
Clearly each $\mathcal{W}_{m,n}$ is uniformly bounded and $R_{m,n}$-disjoint, and 
$$\bigcup_{m \leq k, \, n \leq n(m)} \mathcal{W}_{m,n}$$ is a cover of $X \times Y$. By reindexing using $f^{-1}$, we get indices $n_1 < n_2 < \dots < n_l$ and families $\mathcal{W}_{n_1}, \mathcal{W}_{n_2}, \dots , \mathcal{W}_{n_l}$. For each $n < n_l$ with $n \neq n_i$ for all $i < l$, let $\mathcal{W}_n = \emptyset$. Then each $\mathcal{W}_n$ is $R_n$-disjoint and uniformly bounded, and
$$\bigcup_{j=1}^{n_l} \mathcal{W}_j$$
is a cover of $X \times Y$. We conclude that $X \times Y$ has asymptotic property $C$.
\end{proof}

\begin{lem}
For any $R > 0$ and any $R$-disjoint family $\mathcal{V}$ of annuli with uniformly bounded width in a tree $T$, $\mathcal{V}$ has a uniformly bounded $R$-disjoint refinement.
\end{lem}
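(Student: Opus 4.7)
The subtlety here is that an annulus in a tree can have very large diameter even when its width is small---for instance, in a binary tree of depth $n$, the annulus $[n-1,n)$ has width $1$ but diameter $2(n-1)$. So a genuine refinement is needed, and the natural way to produce one is to cut each annulus along the subtree structure of $T$.

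The plan is as follows. Let $W$ be a common upper bound for the widths of the annuli in $\mathcal{V}$, and fix a positive integer $k$ with $2(k+1) > R$. For each annulus $A = [m,n) \in \mathcal{V}$, set $m' = \max(0, m-k)$. For each vertex $u$ with $d(u,e) = m'$, let $C_u$ denote the set of $v \in T$ whose geodesic from $e$ to $v$ passes through $u$, and include $C_u \cap A$ in the refinement whenever it is nonempty. Take $\mathcal{V}'$ to be the resulting family of pieces.

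The verifications should be routine. Each piece $C_u \cap A$ lies in the subtree rooted at $u$ and meets only depths in $[m,n)$, so has diameter at most $2(n-1-m') \leq 2(W+k-1)$, uniformly bounded in terms of $W$ and $R$. Pieces from distinct annuli of $\mathcal{V}$ are $R$-disjoint because their ambient annuli are. For two distinct pieces $C_{u_1} \cap A$ and $C_{u_2} \cap A$ of the same annulus $A$ (which requires $m' \geq 1$, hence $m' = m-k$), the nearest common ancestor of $u_1$ and $u_2$ lies at depth at most $m-k-1$, so any two points of these pieces are at distance at least $2(k+1) > R$. Every $v \in A$ has an ancestor at depth $m'$, so the pieces cover $A$. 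The edge case $m < k$ collapses to the single piece $A = C_e \cap A$, which is still uniformly bounded since $n \leq k + W$ there, so no $R$-disjointness issue arises within the annulus.

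The only real design choice, and the only obstacle worth naming, is balancing $k$: it must be large enough relative to $R$ to force $R$-disjointness inside a single annulus, while bounded in terms of $R$ alone so that diameters remain controlled. Once $k$ is chosen as above, the rest is bookkeeping.
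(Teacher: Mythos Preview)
Your proof is correct and follows essentially the same strategy as the paper: partition each annulus according to which subtree, rooted at a fixed depth just below the inner radius, a vertex belongs to. The paper phrases this via the Gromov product---defining $x\,E\,y$ on $[a,b)$ by $(x\mid y)_e \geq a-R$, which in a tree is exactly your ``same ancestor at depth $a-R$'' condition---and drops by $R$ rather than your $k\approx R/2$, but the idea and the bounds are the same.
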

\begin{proof}
Let $w = \text{sup}\{ \text{width}(V) : V \in \mathcal{V} \}$. The family of annuli in $\mathcal{V}$ distance at most $R$ from $e$ is uniformly bounded, so it is enough to consider the annuli which are distance $>R$ from $e$. So let $[a, b) = I \in \mathcal{V}$ with $a > R$. For $x,y \in I$, say $x \, E \, y$ if $(x | y)_e \geq a - R$. Note that $E$ is an equivalence relation since $T$ is a tree. If $x E y$, then
$$d(x,y) = d(x,e) + d(y,e) - 2(x|y)_e < 2b - 2(a - R) = 2b - 2a + 2R \leq 2w + 2R$$
On the other hand, if $x, y$ are not $E$-related, then
$$d(x,y) = d(x,e) + d(y,e) - 2(x|y)_e > 2a - 2(a - R) = 2R$$
Hence the $E$-classes of $I$ are uniformly bounded and $R$-disjoint.
\end{proof}
Given a tree $T$ and an $R$-disjoint family of annuli $\mathcal{V}$ in $T$, we denote the refinement obtained from the above lemma by $\text{Ref}_R(\mathcal{V})$. The following theorem makes use of an argument analogous to that of the main theorem of \cite{yamauchi}.

\begin{thm}
If $(T_i)_{i=1}^\infty$ is a sequence of trees, then $\times_{i=1}^{\infty} T_i$ has asymptotic property C.
\end{thm}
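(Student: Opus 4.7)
My strategy will exploit the scaling factor $i$ on the $i$-th coordinate in the metric $d((a_j),(b_j)) = \sum_j j\cdot d(a_j, b_j)$ to obtain ``automatic'' separation in distant coordinates. The key observation is that any two distinct points that differ at some coordinate $j > R$ are automatically at distance at least $j > R$ in the product metric, so in such far coordinates, singletons are $R$-disjoint in $X$ without any additional work. This means a bounded cover only has to ``do work'' in finitely many coordinates at a time.

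Given a non-decreasing sequence $R_0 \leq R_1 \leq \cdots$, I first handle each tree $T_i$ individually: for a target scale $R > 0$, use $\mathrm{asdim}(T_i) \leq 1$ to decompose $T_i$ into two families of annuli of width greater than $R/i$ (so that they are $R$-disjoint in the product metric), and apply Lemma 3.2 to refine each into a uniformly bounded $R$-disjoint family. Next, pick an integer $N$ (large, to be made precise) and apply asymptotic property $C$ of the finite product $X_N = T_1 \times \cdots \times T_N$ (which has finite asymptotic dimension, hence property $C$ by iterating Theorem 3.1) to the given sequence. This yields $n+1$ uniformly bounded families $\mathcal{U}^{\mathrm{core}}_0, \ldots, \mathcal{U}^{\mathrm{core}}_n$ covering $X_N$, with $\mathcal{U}^{\mathrm{core}}_i$ being $R_i$-disjoint. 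Finally, extend each $\mathcal{U}^{\mathrm{core}}_i$ to a family $\mathcal{W}_i$ on $X$: for each $U \in \mathcal{U}^{\mathrm{core}}_i$ and each tail sequence $(v_j)_{j>N} \in \times_{j>N} T_j$ (so $v_j = e_j$ for all but finitely many $j$), put the set $U \times \prod_{j>N}\{v_j\}$ in $\mathcal{W}_i$. Since singletons contribute $0$ to diameter, $\mathcal{W}_i$ inherits a uniform diameter bound from $\mathcal{U}^{\mathrm{core}}_i$; two distinct elements of $\mathcal{W}_i$ differ either in the core factor (giving distance $> R_i$ from the core disjointness) or in some tail coordinate $j > N$ (giving distance $\geq j \geq N+1 > R_i$ provided $N$ is chosen with $N+1 > R_n$). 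Every point of $X$ lies in some $\mathcal{W}_i$ by decomposing into its projection to $X_N$ and its tail singletons.

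The hard part is the compatibility of $N$ and $n$: the finite asymptotic dimension of $X_N$ may force $n$ to grow with $N$, while the tail-singleton extension demands $N+1 > R_n$. For slowly-growing sequences this can be arranged directly, but for fast-growing sequences one must replace some of the naive tail singletons with refined annular decompositions of tail factors (again via Lemma 3.2), and then, by an interleaving argument analogous to the main theorem of \cite{yamauchi}, bundle the per-coordinate refinements across all coordinates into a bounded number of global $R_j$-disjoint families. This bundling---carefully using the uniform asymptotic dimension of the family $\{T_i\}$ and Lemma 3.2 to keep diameters and the total family count finite---is where I expect the main technical effort to lie.
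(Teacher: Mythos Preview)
Your outline correctly isolates the essential obstruction: the naive ``finite core plus singleton tails'' scheme is circular, since the number $n+1$ of families needed for $X_N = T_1 \times \cdots \times T_N$ grows with $N$ (indeed $\mathrm{asdim}\,X_N$ can be as large as $N$), while the tail-singleton trick requires $N+1 > R_n$. For sequences such as $R_i = 2^i$ this cannot be arranged, as you note. You are also right that the resolution is a Yamauchi-style interleaving---but that interleaving \emph{is} the entire proof, and you have not supplied it. As written, the proposal accurately locates where the work lies and then stops; the ``bundling'' paragraph is a description of the difficulty, not a solution to it.

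The paper breaks the circularity by reversing the order of choices: it first fixes an integer $k > R_0$ and commits in advance to exactly $k2^k+1$ families, so the family count depends on $R_0$ alone; only then does it choose the tail cutoff $k+m$ with $m > R_{k2^k}$, which is now possible since $k2^k$ is already determined. The first $k$ coordinates carry $2^m$ shifted annular decompositions $(\mathcal{C}^i_l,\mathcal{D}^i_l)$ produced via Lemma~3.2, with the shift index $l\in\{1,\dots,2^m\}$ selected by which cell $W_l$ of the $\mathrm{asdim}\leq 1$ product decomposition of the middle block $T_{k+1}\times\cdots\times T_{k+m}$ the point occupies. All the ``fat'' annular pieces $\mathcal{C}^i_l$, across all shifts simultaneously, are bundled into a single $R_0$-disjoint family $\mathcal{U}_0$; each ``thin'' strip $\mathcal{D}^s_l$ (one coordinate $s\leq k$ at a time) is paired with a color $t\in\{1,\dots,2^k\}$ recording the $\mathcal{V}^i_0/\mathcal{V}^i_1$ choice in the remaining first-block coordinates, yielding the other $k2^k$ families, each $R_{k2^k}$-disjoint. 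This two-parameter scheme---family count fixed from $R_0$, cutoff chosen afterward from $R_{k2^k}$, annular shifts indexed by the middle block---is precisely the missing mechanism in your proposal.
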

\begin{proof}
Let $R_0 < R_1 < R_2 < \dots $ be arbitrary positive reals. Choose $k, m \in \mathbb{N}$ such that $R_0 < k$ and $R_{k2^k} < m$. Each $T_i$ has asymptotic dimension at most $1$, so we have bounded and $R_{k2^k}$-disjoint families $\mathcal{V}_0^i , \mathcal{V}_1^i$ for $1 \leq i \leq k + m$ such that $\mathcal{V}_0^i \cup \mathcal{V}_1^i$ covers $T_i$. For each $i$, replace each set in $\mathcal{V}_1^i$ with its intersection with the complement of $\cup \mathcal{V}_0^i$ so we have $(\cup \mathcal{V}_0^i) \cap (\cup \mathcal{V}_1^i) = \emptyset$. Note that $\times_{i=1}^\infty T_i$ admits a coarse embedding into a restricted product of trees each with at least $2$ vertices, so we may assume that each $V_0^i$ and each $V_1^i$ is nonempty.

For each $i \in \mathbb{N}$, let $[a,b)_i$ denote the annulus in $T_i$ from $a$ to $b$. Let $S = R_0 + R_{k2^k}$. Fix a bijection $\psi : \{1,2, \dots , 2^m\} \to \{0,1\}^m$. For each $i \in \{1, 2, \dots , k \}$ and $l \in \{1, 2, \dots , 2^m \}$, let
$$C^i_l = \text{Ref}_{R_0} ( \{ [0, (2^m + l)S - R_0)_i \} \cup \{ [(2^mn + l)S, (2^m(n+1) + l)S - R_0)_i : n \in \mathbb{N} \} )$$
$$D^i_l = \text{Ref}_{R_{k2^k}}\{ [(2^mn + l)S - R_0, (2^mn + l)S)_i : n \in \mathbb{N} \}$$
$$W_l = \bigg \{ \prod_{i=k+1}^{k+m} V_i : V_i \in \mathcal{V}^i_{\psi(l)_{i - k}} \bigg \}$$
Note that for each $i$ and $l$, $\mathcal{C}^i_l$ is $R_0$-disjoint and $\mathcal{C}^i_l \cup \mathcal{D}^i_l$ covers $T_i$. Also, for each $i$ we have that $\bigcup_{l=1}^{2^m}\mathcal{D}^i_l$ is $R_{k2^k}$-disjoint, each $\mathcal{W}_l$ is $R_{k2^k}$-disjoint, $\bigcup_{l=1}^{2^m}\mathcal{W}_l$ is disjoint, and $\bigcup_{l=1}^{2^m}\mathcal{W}_l$ covers $\prod_{i=k+1}^{k+m}T_i$.

Now fix a bijection $\varphi : \{ 1, 2, \dots , 2^k \} \to \{0,1\}^k$, let
$$\mathcal{U}_0 = \Bigg \{ \prod_{i=1}^k C_i \times W \times \prod_{i > k + m} \{ x_i \} : (C_1, C_2, \dots , C_k , W ) \in \bigcup_{l = 1}^{2^m} \big ( \prod_{i = 1}^k \mathcal{C}^i_l \times \mathcal{W}_l \big ) , (x_i) \in \times_{i=k+m+1}^\infty T_i \Bigg \} $$
For each $s \in \{1, 2, \dots , k \}$ and $t \in \{ 1, 2, \dots , 2^k \}$ let
\begin{equation}
\begin{aligned}
\mathcal{U}_{2^k(s-1)+t} & = \Bigg \{ \prod_{i=1}^{s-1} V_i \times D \times \prod_{i=s+1}^k V_i \times W \times \prod_{i > k + m}  \{ x_i \} : \\
V_i & \in V^i_{\varphi(t)_i}, i \in \{ 1, \dots , s-1, s+1, \dots , k \}, (D,W) \in \bigcup_{l = 1}^{2^m}(\mathcal{D}^s_l \times \mathcal{W}_l), (x_i) \in \times_{i=k+m+1}^\infty T_i \Bigg \} \notag
\end{aligned}
\end{equation}
The collection of all $\mathcal{C}^i_l, \mathcal{D}^i_l$, and $\mathcal{W}^i_l$ is uniformly bounded, so clearly the $\mathcal{U}_j$, $0 \leq j \leq k2^k$ are uniformly bounded.

To see that $\mathcal{U}_0$ is $R_0$ disjoint, let $U = C_1 \times C_2 \times \dots \times C_k \times W \times \prod_{i >k+m} \{ x_i \}$ and $U'= C_1' \times C_2' \times \dots \times C_k' \times W' \times \prod_{i >k+m} \{ x'_i \} $ be distinct sets in $\mathcal{U}_0$, and let $a \in U, b \in U'$. If $(x_i)$ and $(x'_i)$ are distinct, then $a_i \neq b_i$ for some $i > k+m$, so $d((a_i), (b_i)) > k > R_0$. If $W$ and $W'$ are distinct, then they are disjoint, so $a_i \neq b_i$ for some $i > k$. Otherwise $W = W'$, so we have $W, W' \in W_l$ for some $1 \leq l \leq 2^m$. Hence $C_i, C'_i \in \mathcal{C}^i_l$, but $C_i \neq C_i'$ for some $i$, so $d((a_i), (b_i)) > R_0$ since $\mathcal{C}^i_l$ is $R_0$-disjoint for each $i$.

Each $\mathcal{U}_j$ for $1 \leq j \leq k2^k$ can be shown to be $R_{k2^k}$-disjoint along similar lines. It is also not hard to see that the $\mathcal{U}_j$ cover $\times_{i=1}^\infty T_i$.
\end{proof}

\begin{thm}
Suppose $(G_i)_{i=1}^\infty$ is a sequence of countable groups each of finite asymptotic dimension. Then $\times_{i=1}^\infty G_i$ has asymptotic property $C$.
\end{thm}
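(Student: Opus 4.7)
The plan is to reduce to Theorem 3.3 by coarsely embedding $\times_{i=1}^\infty G_i$ into a restricted product of trees. By a known theorem of Dranishnikov, every metric space of finite asymptotic dimension coarsely embeds into a finite product of trees; I would apply this to each $G_i$ to obtain a coarse embedding $f_i : G_i \to T_{i,1} \times \cdots \times T_{i, k_i}$, and translate so that each $f_i$ sends the identity of $G_i$ to the root of every $T_{i, l}$.

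I would then enumerate the family $\{T_{i, l} : i \ge 1,\ 1 \le l \le k_i\}$ as a single sequence $(T_j)_{j=1}^\infty$ in block order, placing $T_{i, l}$ in position $N_{i-1} + l$ with $N_i = k_1 + \cdots + k_i$. Define $F : \times_{i=1}^\infty G_i \to \times_{j=1}^\infty T_j$ by sending $(g_i)$ to the sequence whose entry in position $N_{i-1} + l$ is the $l$-th tree coordinate of $f_i(g_i)$. Basepoint preservation makes $F$ well defined on restricted products.

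The main step is to verify that $F$ is a coarse embedding. The upper bound is routine: if $t = d((g_i), (h_i))$, then only coordinates with $i \le t$ contribute (the group metrics take integer values), each satisfies $d(g_i, h_i) \le t/i \le t$, and summing $N_i \cdot \phi_+^{(i)}(t)$ over $i \le t$ gives a finite function $\rho_+(t)$, where $\phi_\pm^{(i)}$ denote the control functions of $f_i$. For the lower bound, the weight at position $N_{i-1} + l$ in $\times_j T_j$ is at least $i$, so $d(F((g_i)), F((h_i))) \ge \sum_i i \cdot \phi_-^{(i)}(d(g_i, h_i))$. To make this grow with $t$, I would arrange each $f_i$ to send distinct elements of $G_i$ to points at distance at least $1$ in its tree product (achievable by small perturbations), giving a uniform lower bound $\phi_-^{(i)}(k) \ge 1$ for integers $k \ge 1$; then a case split on whether some coordinate has large $d(g_i, h_i)$ or whether many coordinates each contribute at least $1$ shows the sum tends to infinity with $t$.

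Having established that $F$ is a coarse embedding, Theorem 3.3 provides asymptotic property $C$ for $\times_{j=1}^\infty T_j$, and Proposition 2.2 transfers it to $\times_{i=1}^\infty G_i$. The main obstacle is the lower bound in the verification of the coarse embedding: the functions $\phi_-^{(i)}$ are not uniform across $i$, but the $1$-separation condition on each $f_i$ supplies enough uniformity at unit scale to push through a proper lower control $\rho_-(t)$.
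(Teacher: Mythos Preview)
Your proposal is correct and follows essentially the same route as the paper: embed each $G_i$ into a finite product of trees via Dranishnikov's theorem, concatenate to obtain a map $F$ into a restricted product of trees, verify $F$ is a coarse embedding (with the lower control handled via a unit-scale separation condition on each $f_i$), and then invoke Theorem~3.3 together with Proposition~2.2. The only cosmetic differences are that the paper justifies the $1$-separation by noting that bounded geometry of $G_i$ lets one take $f_i$ injective, and it organizes the lower-bound argument as a proof by contradiction rather than a case split.
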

\begin{proof}
By Theorem A from \cite{dranishnikov2}, every countable group of asymptotic dimension $n$ admits a coarse embedding into a product of $n+1$ trees equipped with the $\text{sup}$ metric. Hence there exists a sequence of trees $(T_k)_{k=1}^\infty$ and a strictly increasing function $K : \mathbb{N} \to \mathbb{N}$ such that for each $i \in \mathbb{N}$ we have a coarse embedding $f_i : G_i \to \prod_{k = K(i)}^{K(i+1) - 1} T_k$, where $\prod_{k = K(i)}^{K(i+1) - 1} T_k$ is equipped with the sup metric. Since each $G_i$ has bounded geometry, we may assume that each $f_i$ is injective. For each $i$ fix proper nondecreasing maps
$$\rho_-^i, \rho_+^i : [0, \infty) \to [0, \infty), \lim_{t \to \infty}\rho_-^i(t) = \infty$$
witnessing that $f_i$ is a coarse embedding. We may assume for every $t > 0$ that
\newline $\rho_+^i(t) \leq \rho_+^j(t)$ for all $i \leq j$. Let $F : \times_{i=1}^\infty X_i \to \times_{k=1}^\infty T_k$ be defined by 
$$F((x_i)_{i=1}^\infty) = (f_i(x_i))_{i=1}^\infty$$
Note that each $f_i(x_i) \in \prod_{k = K(i)}^{K(i+1) - 1} T_k$, so we view $(f_i(x_i))_{i=1}^\infty$ as an element of $\times_{i=1}^\infty T_i$ by appending the terms.
Now let $x, y \in \times_{i=1}^\infty G_i$ and let $M = K(d(x,y) + 1)$. Then $x_i = y_i$ for all $i \geq M$, so
\begin{equation}
\begin{aligned}
d(F(x), F(y)) & \leq \sum_{i=1}^\infty (K(i+1)-1)(K(i+1) - K(i)) d(f_i(x_i), f_i(y_i)) \\
& \leq M^2 \sum_{i=1}^M d(f_i(x_i), f_i(y_i)) \\
& \leq M^2 \sum_{i=1}^M \rho_+^i (d(x_i, y_i)) \\
& \leq M^2 \sum_{i=1}^M \rho_+^M (d(x_i, y_i)) \\
& \leq M^3 \rho_+^M(d(x,y)) \notag
\end{aligned}
\end{equation}
Since the last expression only depends on $d(x, y)$, we can make this the definition of $\rho_+$. Since each $f_i$ is injective, we can assume $\rho^i_-(t) \geq 1$ if $t \geq 1$. Let $\rho_-$ be defined by
$$\rho_-(t) = \text{min} \Big ( \sum_{i=1}^\infty i \cdot \rho_-^i(d(x_i, y_i)) : x,y \in \times_{i=1}^\infty G_i, \, d(x, y) \geq t \Big )$$
Note that $\rho_-$ is nondecreasing, and $\rho_-(d(x, y)) \leq d(F(x), F(y))$ for all $x,y \in \times_{i=1}^\infty G_i$. Suppose $\rho_-$  is not proper, i.e. that there exists an integer $m$ and a sequence of integers $n_1 < n_2 < \dots $ with $\rho_-(n_j) \leq m$ for every $j$. If $x,y \in \times_{i=1}^\infty G_i$ with $x_i \neq y_i$ for some $i > m$, then $d(x_i, y_i) \geq 1$, so $i \cdot \rho_-^i(d(x_i, y_i)) \geq i > m$. Hence for each $n_j$ we can fix $x^j, y^j \in \times_{i=1} G_i$ such that $x^j$ and $y^j$ agree past $m$,
$$d(x^j, y^j) \geq n_j$$
and the minimum in the definition of $\rho_-$ is attained by $x^j,y^j$. Since the pairs $x^j, y^j$ agree past $m$ and $d(x^j, y^j) \to \infty$ as $j \to \infty$, for some $i < m$ the distances $d(x^j_i, y^j_i)$ increase without bound. But $\rho_-^i(d(x^j_i, y^j_i)) \leq \rho(d(x^j, y^j)) \leq m$ for each $j$, contradicting the properness of $\rho_-^i$. Hence $\rho_-$ is proper, and $\rho_-, \rho_+$ witness that $F$ is a coarse embedding. Hence $\times_{i=1}^\infty G_i$ has asymptotic property $C$.
\end{proof}

\section{Hyperbolic Property C}
Buyalo and Schroeder introduced hyperbolic dimension in \cite{buyaloschroeder}, and Cappadocia introduced weak hyperbolic dimension in \cite{cappadocia}. We introduce infinite-dimensional versions of these properties.

\begin{definition}
A metric space $X$ has \textit{hyperbolic property C} if for every sequence $R_0, R_1, \dots$ of positive reals there exists $n \geq 0$ and $R_i$-disjoint families $\mathcal{U}_i$, $i = 0, \dots , n$ such that $\bigcup_{i=0}^n \mathcal{U}_i$ is a uniformly large scale doubling cover of $X$.
\end{definition}

\begin{definition}
A metric space $X$ has \textit{weak hyperbolic property C} if for every sequence $R_0, R_1, \dots$ of positive reals there exists $n \geq 0$ and $R_i$-disjoint families $\mathcal{U}_i$, $i = 0, \dots , n$ such that $\bigcup_{i=0}^n \mathcal{U}_i$ is a weakly uniformly large scale doubling cover of $X$.
\end{definition}

The following is immediate from the definitions.

\begin{prop}
Let $X$ be a metric space. If $X$ has asymptotic property $C$, then $X$ has hyperbolic property $C$. If $X$ has hyperbolic property $C$, then $X$ has weak hyperbolic property $C$.
\end{prop}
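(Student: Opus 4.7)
The plan is to unwind both implications directly from the definitions, as the author indicates this is immediate. No clever combinatorial construction like in the earlier theorems of the paper is needed here.

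For the second implication, I observe that the condition ``weakly uniformly large scale doubling'' is literally the first half of the definition of ``uniformly large scale doubling.'' Any family $\bigcup_{i=0}^n \mathcal{U}_i$ that certifies hyperbolic property $C$ already satisfies the weakly uniform condition, so the exact same families (for the same sequence $R_0, R_1, \dots$) witness weak hyperbolic property $C$.

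For the first implication, my key subclaim is that any uniformly bounded family $\mathcal{U}$ is automatically uniformly large scale doubling. Letting $D = \mathrm{mesh}\,\mathcal{U}$, I would take $N = 1$ and $R = D$: for any $U \in \mathcal{U}$, any $x \in X$, and any $r \geq R$, the set $B_{2r}(x) \cap U$ has diameter at most $D \leq r$, so it is empty or is covered by a single ball of radius $r$ centered at any of its points. For a finite union $V = U_1 \cup \cdots \cup U_k$ of sets in $\mathcal{U}$, choose $R' \geq \mathrm{diam}(V)$; by the same argument $V$ is $(1, R')$-large scale doubling, with the same $N = 1$ as required by the two-part definition.

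With this subclaim in hand, the first implication follows immediately: given any positive sequence $R_0, R_1, \dots$, asymptotic property $C$ provides an $n$ and uniformly bounded $R_i$-disjoint families $\mathcal{U}_0, \dots, \mathcal{U}_n$ covering $X$. Then $\bigcup_{i=0}^n \mathcal{U}_i$ has mesh at most $\max_i \mathrm{mesh}\,\mathcal{U}_i < \infty$, so by the subclaim it is a uniformly large scale doubling cover of $X$, witnessing hyperbolic property $C$. The only step requiring any care is verifying that the single constant $N = 1$ handles both the individual-set clause and the finite-union clause of ``uniformly large scale doubling'' simultaneously, but this is immediate from the triviality that every uniformly bounded set sits in one ball once the radius is large enough.
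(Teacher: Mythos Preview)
Your proof is correct and spells out exactly what the paper leaves implicit by declaring the result ``immediate from the definitions.'' The only quibble is that, since $B_r(x)$ denotes the \emph{open} ball in this paper, you should take $R$ strictly greater than $D$ (and likewise $R' > \mathrm{diam}(V)$): with $R = D$ a set of diameter exactly $D$ need not sit inside any open $r$-ball when $r = D$ (e.g.\ three points at pairwise distance $1$), but taking $R > D$ fixes this trivially and the rest of your argument goes through unchanged.
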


\begin{prop}
If $\{ X_\alpha : \alpha \in A \}$ is a family of metric spaces with finite asymptotic dimension uniformly, then $\{ X_\alpha : \alpha \in A \}$ has sFDC.
\end{prop}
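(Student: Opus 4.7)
My plan is to reduce the entire chain to a single application of the uniform asymptotic dimension hypothesis at the largest scale we will use, and then peel off one family per step. Let $n$ be such that $\text{asdim}\{X_\alpha\} \leq n$, and let $R_0 \leq R_1 \leq \ldots$ be the given sequence. I would first apply the hypothesis at the scale $R_n$: this produces $D > 0$ and, for each $\alpha$, $R_n$-disjoint families $\mathcal{V}_0^\alpha, \ldots, \mathcal{V}_n^\alpha$ of mesh $\leq D$ whose union covers $X_\alpha$. Since $R_j \leq R_n$, each $\mathcal{V}_k^\alpha$ is automatically $R_j$-disjoint for every $j \leq n$, so these same families can serve at every scale along the sFDC chain.

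Next, for each $\alpha$ I iteratively peel off pieces. Setting $B_0^\alpha = X_\alpha$, define $\tilde{\mathcal{V}}_k^\alpha = \{V \cap B_k^\alpha : V \in \mathcal{V}_k^\alpha\} \setminus \{\emptyset\}$ and $B_{k+1}^\alpha = B_k^\alpha \setminus \bigcup \tilde{\mathcal{V}}_k^\alpha$ for $k = 0, \ldots, n$. A straightforward induction gives $B_k^\alpha \subset \bigcup_{i \geq k} \bigcup \mathcal{V}_i^\alpha$, so $B_{n+1}^\alpha = \emptyset$, and each $\tilde{\mathcal{V}}_k^\alpha$ remains $R_n$-disjoint with mesh $\leq D$. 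I then set $\mathcal{U}_k = \bigcup_\alpha(\tilde{\mathcal{V}}_0^\alpha \cup \ldots \cup \tilde{\mathcal{V}}_k^\alpha) \cup \{B_{k+1}^\alpha : \alpha \in A\}$ for $0 \leq k < n$ and $\mathcal{U}_n = \bigcup_\alpha(\tilde{\mathcal{V}}_0^\alpha \cup \ldots \cup \tilde{\mathcal{V}}_n^\alpha)$; the latter is uniformly bounded by $D$.

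To verify the sFDC chain, the initial step $\mathcal{X} \stackrel{R_0}{\longrightarrow} \mathcal{U}_0$ is witnessed by the $R_0$-disjoint family $\tilde{\mathcal{V}}_0^\alpha$ together with the singleton family $\{B_1^\alpha\}$. Each subsequent step $\mathcal{U}_k \stackrel{R_{k+1}}{\longrightarrow} \mathcal{U}_{k+1}$ treats already-collected bounded members $V$ trivially (as $V = V \cup \emptyset$, with $V \in \mathcal{U}_{k+1}$) and decomposes each residual as $B_{k+1}^\alpha = \bigcup \tilde{\mathcal{V}}_{k+1}^\alpha \cup B_{k+2}^\alpha$. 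No step is genuinely difficult; the only thing to watch is the bookkeeping that every set used in a decomposition of an element of $\mathcal{U}_k$ lies in $\mathcal{U}_{k+1}$, and that the singleton residual families are trivially $R_{k+1}$-disjoint.
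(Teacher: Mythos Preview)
Your proof is correct and follows essentially the same approach as the paper's: apply the uniform asymptotic dimension hypothesis once at the top scale $R_n$, then peel off one $R_n$-disjoint family per step while carrying forward the already-collected bounded pieces and the residual complement. Your $B_{k+1}^\alpha$ and $\tilde{\mathcal{V}}_k^\alpha$ are exactly the paper's $X_\alpha \setminus \bigcup\bigcup_{i\le k}\mathcal{V}^\alpha_i$ and $\mathcal{V}^\alpha_k$, and your $\mathcal{U}_k$ coincides with the paper's $\mathcal{V}_k$, so the two arguments are the same up to notation.
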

\begin{proof}
Suppose $\text{asdim} \, \{ X_\alpha \} = n$, and consider a sequence of positive reals $R_0 < R_1 < \dots$ Then there exists $D > 0$ such that, for each $\alpha$ there exist $\mathcal{U}^\alpha_0, \mathcal{U}^\alpha_1, \dots , \mathcal{U}^\alpha_n$ such that $\mathcal{U}^\alpha_i$ is $R_n$-disjoint,  $\text{mesh} \, \mathcal{U}^\alpha_i \leq D$ for each $i= 0 , \dots , n$, and $\bigcup_{i=0}^n \mathcal{U}_i$ is a cover of $X_\alpha$. We define families $\mathcal{V}^\alpha_i$ for each $\alpha$ and $\mathcal{V}_i$ inductively for $0 \leq i \leq n$. Let
$$\mathcal{V}^\alpha_0 = \{ X_\alpha \cap U : U \in \mathcal{U}^\alpha_0 \}$$
and
$$\mathcal{V}_0 = \Big ( \bigcup_{\alpha \in A} \mathcal{V}_0^\alpha \Big ) \cup \{ X_\alpha \setminus \cup \mathcal{V}^\alpha_0 : \alpha \in A \}$$
For $0 < k < n$, having defined $\mathcal{V}_i^\alpha, \mathcal{V}_i$ for $i < k$, let
$$V = $$
$$\mathcal{V}^\alpha_k = \{ ( X_\alpha \setminus \bigcup \bigcup_{i=0}^{k-1} \mathcal{V}^\alpha_{i} ) \cap U : U \in \mathcal{U}^\alpha_k \}$$
and
$$\mathcal{V}_k = \Big ( \bigcup_{\alpha \in A} \bigcup_{i=0}^k \mathcal{V}^\alpha_i \Big ) \cup \{ X_\alpha \setminus \bigcup \bigcup_{i=0}^k \mathcal{V}^\alpha_i : \alpha \in A\}$$

Finally, let
$$\mathcal{V}_n = \{ ( X_\alpha \setminus \bigcup \bigcup_{i=0}^{n-1} \mathcal{V}^\alpha_{i} ) \cap U : \alpha \in A, \, U \in \mathcal{U}^\alpha_n \} \cup \bigcup_{\alpha \in A} \bigcup_{i=0}^{n-1} \mathcal{V}^\alpha_i$$
Then $\mathcal{V}_n$ is a subfamily of $\bigcup_{\alpha \in A} \bigcup_{i=0}^n \mathcal{U}^\alpha_i$, so $\mathcal{V}_n$ is uniformly bounded. Also
$$\{ X_\alpha \} \stackrel{R_{0}}{\longrightarrow} \mathcal{V}_{0} \stackrel{R_{1}}{\longrightarrow} \mathcal{V}_1 \stackrel{R_{2}}{\longrightarrow} \dots \stackrel{R_{n}}{\longrightarrow} \mathcal{V}_n$$
since each $\mathcal{U}^\alpha_i$ is $R_i$-disjoint. We conclude that $\{ X_\alpha \}$ has sFDC.
\end{proof}

\begin{thm}
If $X$ is a metric space with weak hyperbolic property $C$, then $X$ has $sFDC$.
\end{thm}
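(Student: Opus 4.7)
The plan is to use weak hyperbolic property $C$ to peel off an initial segment of an sFDC chain for $X$, reducing the problem to a family that is itself weakly uniformly large scale doubling, and then to invoke the standard fact that large scale doubling implies finite asymptotic dimension (with a bound depending only on the doubling constants) so that Proposition 3.6 can finish the chain.

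Given a sequence $R_0 \le R_1 \le \ldots$ of positive reals, I first apply weak hyperbolic property $C$ to obtain $n \ge 0$ and $R_i$-disjoint families $\mathcal{U}_0,\ldots,\mathcal{U}_n$ whose union $\mathcal{U} = \bigcup_{i=0}^n \mathcal{U}_i$ is a weakly uniformly large scale doubling cover of $X$. Set $A_{-1} = X$ and $A_k = A_{k-1} \setminus \bigcup \mathcal{U}_k$ for $0 \le k \le n$, so that $A_n = \emptyset$. Define
$$\mathcal{V}_k = \{A_k\} \cup \{\, U \cap A_{j-1} : U \in \mathcal{U}_j,\ 0 \le j \le k \,\}.$$
At step $k$ the distinguished element $A_{k-1} \in \mathcal{V}_{k-1}$ decomposes as the union of the $R_k$-disjoint family $\{U \cap A_{k-1} : U \in \mathcal{U}_k\}$ together with the singleton family $\{A_k\}$; every other element of $\mathcal{V}_{k-1}$ is carried forward using the trivial decomposition $U = \{U\} \cup \emptyset$, which is $R_k$-disjoint for any $R_k$. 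This produces
$$\{X\} \stackrel{R_0}{\longrightarrow} \mathcal{V}_0 \stackrel{R_1}{\longrightarrow} \mathcal{V}_1 \stackrel{R_2}{\longrightarrow} \cdots \stackrel{R_n}{\longrightarrow} \mathcal{V}_n,$$
and because $A_n = \emptyset$, every nonempty element of $\mathcal{V}_n$ is a subset of some member of $\mathcal{U}$.

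Since $(N,R)$-large scale doubling passes to arbitrary subsets (the same covering balls in $X$ still cover $B_{2r}(x)\cap U'$ whenever $U' \subset U$), $\mathcal{V}_n$ is weakly uniformly large scale doubling. The main obstacle is to conclude from this that $\mathcal{V}_n$ has uniformly finite asymptotic dimension: this is a classical consequence of Lang--Schlichenmaier-type arguments to the effect that a metric space with a large scale doubling structure has asymptotic dimension bounded in terms of the doubling constant, and the uniformity is automatic because all elements of $\mathcal{V}_n$ share the same pair $(N,R)$; the one technicality to watch is that the paper's definition takes covering balls with centers in the ambient $X$ rather than in $U$, which costs at most a harmless factor of $2$ in the scale when re-centering into $U$. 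Once the bound $\text{asdim}\, \mathcal{V}_n \le m$ is in hand, Proposition 3.6 applied to the tail sequence $R_{n+1},R_{n+2},\ldots$ yields a finite continuation of the chain ending in a uniformly bounded family, and concatenation with the peeling above produces the desired sFDC chain for $\{X\}$.
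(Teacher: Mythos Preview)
Your proof is correct and follows essentially the same route as the paper: peel off the first $n+1$ steps of an sFDC chain using the $\mathcal{U}_i$, observe that the resulting family consists of subsets of members of $\bigcup_i \mathcal{U}_i$ and is therefore weakly uniformly large scale doubling, pass to uniform finite asymptotic dimension, and then finish via the proposition that such families have sFDC. The only differences are cosmetic: the paper cites Proposition~4.5 of Nicas--Rosenthal for the doubling-to-asdim step where you invoke Lang--Schlichenmaier, and the proposition you refer to as ``3.6'' is numbered 4.4 here.
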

\begin{proof}
Let $X$ be a metric space with weak hyperbolic property $C$. Given a sequence $R_0 < R_1 < R_2 < \dots$, there exists $n \in \mathbb{N}$ and $\mathcal{U}_0, \mathcal{U}_1, \mathcal{U}_2, \dots \mathcal{U}_n$ with each $\mathcal{U}_i$ an $R_i$-disjoint family of subsets of $X$, such that $\bigcup_{i=0}^n \mathcal{U}_i$  is a weakly large scale doubling cover of $X$. Let
$$\mathcal{V}_0 = \mathcal{U}_0 \cup \{ X \setminus (\cup \, \mathcal{U}_0) \}$$
Clearly $\{ X \} \stackrel{R_0}{\longrightarrow} \mathcal{V}_0$. Next we $R_1$-decompose $\mathcal{V}_0$ by intersecting $X \setminus (\cup \mathcal{U}_0)$ with the sets in $\mathcal{U}_1$ and so on. In the end we have $\mathcal{V}_n$ a subfamily of $\bigcup_{i=0}^n \mathcal{U}_i$. Hence $\mathcal{V}_n$ is weakly uniformly large scale doubling.
By Proposition 4.5 in \cite{nicasrosenthal}, $\mathcal{V}_n$ has finite asymptotic dimension uniformly. Hence by Proposition 4.4 there is $m \geq 0$ and a sequence of families $\mathcal{V}_{n+1}, \mathcal{V}_{n+2}, \dots, \mathcal{V}_{n+m}$ such that $\mathcal{V}_{n+i} \stackrel{R_{n+i+1}}{\longrightarrow} \mathcal{V}_{n+i+1}$ for every $i < m$, and $\mathcal{V}_{n+m}$ is uniformly bounded. Hence $X$ has sFDC.
\end{proof}

The above theorem and theorem 3.4 from \cite{dranishnikovzarichnyi} together yield the following:

\begin{cor}
If $X$ is a metric space with weak hyperbolic property $C$, then $X$ has Property $A$.
\end{cor}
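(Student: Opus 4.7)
The plan is to chain together two implications, both already available: first apply the preceding theorem to pass from weak hyperbolic property $C$ to sFDC, then quote Theorem 3.4 of \cite{dranishnikovzarichnyi} to pass from sFDC to property $A$.

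More concretely, I would start the proof by letting $X$ be a metric space with weak hyperbolic property $C$. By the theorem immediately preceding this corollary, $X$ has sFDC. By Theorem 3.4 of \cite{dranishnikovzarichnyi}, any metric space with sFDC has property $A$. Applying this to $X$ yields the conclusion.

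There is essentially no hard step here: both ingredients are stated (or cited) earlier in the paper, and the corollary is really just a transitive closure of the chain of implications
\[
\textit{weak hyperbolic property } C \;\Longrightarrow\; sFDC \;\Longrightarrow\; \textit{property } A.
\]
The only subtlety worth mentioning is making sure the hypotheses of Theorem 3.4 of \cite{dranishnikovzarichnyi} are applied to a single metric space rather than a general metric family; since a single space $X$ can be identified with the one-element family $\{X\}$, this causes no issue. So the proof amounts to one sentence invoking the previous theorem and one sentence invoking the cited result.
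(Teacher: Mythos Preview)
Your proposal is correct and matches the paper's own justification exactly: the paper states that the corollary follows by combining the preceding theorem (weak hyperbolic property $C \Rightarrow$ sFDC) with Theorem~3.4 of \cite{dranishnikovzarichnyi} (sFDC $\Rightarrow$ property $A$). Your remark about identifying $X$ with the singleton family $\{X\}$ is a harmless clarification not present in the paper.
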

\medskip

\end{document}